\tikzstyle{vertex}=[circle,draw=black,fill=black,inner sep=0,minimum size=3pt,text=white,font=\footnotesize]
\newtheorem{theorem}{Theorem}
\newtheorem{definition}{Definition}
\newtheorem{claim}{Claim}
\newtheorem{conjecture}{Conjecture}
\newcommand{\C}{\mathcal{C}}
\newcommand{\h}{\mathcal{H}}
\newcommand{\abs}[1]{\left\lvert{#1}\right\rvert}
\DeclareMathOperator{\ex}{ex}
\newcommand{\subs}{\subseteq}
\title{On the Tur\'an number of some ordered even cycles}
\author[1]{Ervin Gy\H{o}ri \thanks{Research supported by NKFIH grant K116769.}}
\author[2]{D\'aniel Kor\'andi \thanks{Research supported by SNSF grants 200020-162884 and 200021-175977.}} 
\author[3]{Abhishek Methuku \protect\footnotemark[1]}
\author[2]{Istv\'an Tomon \protect\footnotemark[2]}
\author[1]{Casey Tompkins \protect\footnotemark[1]}
\author[1]{M\'at\'e Vizer \thanks{Research supported by NKFIH grant SNN 116095.}} 
\affil[1]{Alfr\'ed R\'enyi Institute of Mathematics, Hungarian Academy of Sciences. \newline
 \texttt{gyori.ervin@renyi.mta.hu, \{ctompkins496,vizermate\}@gmail.com}}
\affil[2]{\'{E}cole Polytechnique F\'{e}d\'{e}rale de Lausanne. 
 \texttt{ \{daniel.korandi,istvan.tomon\}@epfl.ch}}
\affil[3]{Central European University, Budapest.\par
 \texttt{abhishekmethuku@gmail.com}}
\begin{document}

\maketitle

\begin{abstract}
A classical result of Bondy and Simonovits in extremal graph theory states that if a graph on $n$ vertices contains no cycle of length $2k$ then it has at most $O(n^{1+1/k})$ edges. However, matching lower bounds are only known for $k=2,3,5$.

In this paper we study ordered variants of this problem and prove some tight estimates for a certain class of ordered cycles that we call \emph{bordered cycles}. In particular, we show that the maximum number of edges in an ordered graph avoiding bordered cycles of length at most $2k$ is $\Theta(n^{1+1/k})$.

Strengthening the result of Bondy and Simonovits in the case of 6-cycles, we also show that it is enough to forbid these bordered orderings of the 6-cycle to guarantee an upper bound of $O(n^{4/3})$ on the number of edges.

\end{abstract}

\noindent
{\bf Keywords:} Geometric graph, Tur\'an number, Ordered hexagon, Bordered cycle

\noindent
{\bf AMS Subj.\ Class.\ (2010)}: 05D99, 51E99

\section{Introduction}

Tur\'an-type problems are generally formulated in the following way: one fixes some graph properties and tries to determine the maximum or minimum number of edges an $n$-vertex graph with the prescribed properties can have. These kinds of extremal problems have a rich history in combinatorics, going back to 1907, when
Mantel \cite{M1907} determined the maximum number of edges possible in a triangle-free graph. The systematic study of these problems began with Tur\'an \cite{T1941}, who generalized Mantel's result to arbitrary complete graphs.

For simple graphs $G$ and $H$, we say that $G$ is \emph{H-free} if $G$ does not contain $H$ as a subgraph. Given a graph $G$, its vertex set and edge set are denoted by $V(G)$ and $E(G)$, respectively. 

\begin{definition} Given a graph $H$ and a positive integer $n$, the \emph{Tur\'an number} of $H$ is
\begin{displaymath}
\ex(n,H):= \max \{|E(G)| \ : |V(G)|=n \textrm{ and G is H-free}\}.
\end{displaymath}
\end{definition}

Erd\H{o}s, Stone and Simonovits \cite{ES1965,ES1946} showed that the behavior of the Tur\'an  number of a general graph $H$ is determined by its chromatic number, $\chi(H)$, when $\chi(H) \ge 3$ . They proved that if $H$ is a simple graph, then 
\begin{displaymath}
\ex(n, H ) = \left(1-\frac{1}{\chi(H)-1}\right)\frac{n^2}{2} + o(n^2),
\end{displaymath}
which is an asymptotically correct result except when $H$ is bipartite.

In the bipartite case, one of the most natural problems is to estimate the Tur\'an number of even cycles. A classical result of Bondy and Simonovits \cite{BS1974} from 1974 is the following.
\begin{theorem}[Bondy--Simonovits] \label{BS}
For any $k \ge 2$, we have $\ex(n,C_{2k})=O(n^{1+\frac{1}{k}})$.\footnote{In this paper we use the standard asymptotic notations $O,o,\Theta$ understood as $n\to\infty$. The parameter $k$ is always assumed to be a constant.}
\end{theorem}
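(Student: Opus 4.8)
The plan is to prove the equivalent quantitative statement: if $G$ is an $n$-vertex graph with $e(G)\ge C\,n^{1+1/k}$ for a sufficiently large constant $C=C(k)$, then $G$ contains a copy of $C_{2k}$. First I would clean up $G$. Iteratively deleting any vertex of current degree less than $\tfrac{e(G)}{2n}$ destroys fewer than $e(G)$ edges in all, so this process halts at a nonempty subgraph of minimum degree at least $d:=\tfrac{e(G)}{2n}\ge\tfrac{C}{2}\,n^{1/k}$; restricting further to one connected component preserves the minimum degree and does not increase the number of vertices. Hence we may assume $G$ is connected on $N\le n$ vertices with $\delta(G)\ge d$, where $d$ is of order $n^{1/k}$, and it suffices to produce a $C_{2k}$ from this.

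Next I would root a breadth-first search tree $T$ at an arbitrary vertex and let $L_0,L_1,L_2,\dots$ be its layers, with $\ell_j:=|L_j|$. Every edge of $G$ lies inside a layer or between two consecutive layers, and every non-root vertex has its $T$-parent in the previous layer. Since the neighbourhood of any $u\in L_j$ is contained in $L_{j-1}\cup L_j\cup L_{j+1}$, we get $\ell_{j-1}+\ell_j+\ell_{j+1}\ge d$ for every interior index $j$; hence at least one of any three consecutive layers is \textbf{fat} (has at least $d/3$ vertices), and summing the inequality over $j$ shows $T$ has only $O(N/d)=O(n^{1-1/k})$ layers. The target configuration is a non-tree edge $xy$ with $x\in L_j$ and $y\in L_{j+1}$, for some $j\ge k-1$, whose endpoints' root-paths in $T$ first split exactly at layer $L_{j-k+1}$: then the two internally disjoint $T$-paths from $x$ and from $y$ up to that common ancestor have lengths $k-1$ and $k$, and together with the edge $xy$ they form a cycle of length exactly $2k$. (More generally, a pair of vertices in one layer joined by a short path, plus their two tree branches up to their split point, produces an even cycle whose length is dictated by the split layer — so everything reduces to forcing a split at the right level.)

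Finally, to locate such a configuration: there are $e(G)-(N-1)=\Omega(n^{1+1/k})$ non-tree edges, and since $T$ has only $O(n^{1-1/k})$ layers, a positive fraction of them are incident with fat layers and run between consecutive layers; the aim is to double-count these non-tree edges according to the layer at which their endpoints' root-paths diverge, and to show that if none diverges at the distinguished layer then the edges are forced to concentrate in a way that already yields a $C_{2k}$ by ``path surgery'' — lengthening or shortening a cycle of a nearby even length by rerouting along the abundant edges, which is the standard mechanism behind the fact that this much structure gives cycles of \emph{all} even lengths in an interval around $2k$. I expect the \textbf{main obstacle} to be exactly this: pinning the cycle length to \emph{exactly} $2k$ rather than settling for some even cycle. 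That is why the argument has to track path lengths through the BFS layers rather than merely count homomorphic copies of $P_k$, and why it must exclude the degenerate ``book'' situation in which all available short paths between a fixed pair of vertices share an internal vertex, so that no two of them are internally disjoint. Choosing the layer index $j$ so that the relevant layers are genuinely fat, and disposing of the boundary cases (fewer than $k$ layers, or layers that stay below $d/3$ throughout), are where the remaining care goes.
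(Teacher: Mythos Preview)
The paper does not prove this theorem; it is quoted as the classical result of Bondy and Simonovits and used only as background. So there is no in-paper proof to compare your attempt against.

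Evaluating your proposal on its own merits: the reduction to minimum degree $d\gg n^{1/k}$ and the BFS layering are standard and fine, but the final paragraph is a description of what remains to be done rather than a proof. You correctly flag that pinning the cycle length to \emph{exactly} $2k$ is the crux, and then you do not actually do it. The mechanism you propose first --- find a non-tree edge whose endpoints' root-paths split at precisely depth $j-k+1$ --- has no reason to succeed: the split depth of a non-tree edge is not under your control, and there may be no such edge. The fallback you describe (``double-count by split layer, and if none split at the right depth then the edges concentrate so that path surgery gives a $C_{2k}$'') is left entirely unspecified; this is exactly the hard part of Bondy--Simonovits, and invoking ``the standard mechanism'' is not a substitute for it. In the actual proofs, the work is a separate lemma showing that a bipartite graph between two consecutive layers (or a graph on one layer) with enough edges already contains a $C_{2\ell}$ for every $\ell$ in a range, proved via a $\Theta$-graph argument that controls internally disjoint $k$-paths; the BFS tree is used only to locate such a dense bipartite piece, not to supply the cycle itself.

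A smaller gap: the claim that ``a positive fraction of the non-tree edges are incident with fat layers and run between consecutive layers'' does not follow from the layer inequalities you wrote; edges could concentrate inside layers, or between two thin consecutive layers. You would need to pigeonhole on the edges directly (there is a layer $L_j$ incident to at least $e(G)/(\text{number of layers})$ edges), not on layer sizes.
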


A major open question in extremal graph theory is whether this upper bound gives the correct order of magnitude. This was verified for $k=2,3$ and $5$. For example, the best known bounds for hexagons are due to F\"uredi, Naor and Verstra\"ete~\cite{FNV2006}, who proved that
\begin{equation}
\label{eq:c_6_free}
0.5338 \ n^{4/3} \le \ex(n, C_6) \le 0.6272 \ n^{4/3}.
\end{equation}
The corresponding girth problem was studied by Erd\H{o}s and Simonovits \cite{ES1982}, who conjectured that the same lower bound holds even if we also forbid cycles of shorter lengths.
\begin{conjecture}[Erd\H{o}s--Simonovits] \label{conj:es}
For any $k \ge 2$, we have 
\[ \ex(n,\{C_3,\dots,C_{2k}\})=\Theta(n^{1+\frac{1}{k}}). \]
\end{conjecture}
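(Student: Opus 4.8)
The plan is to handle the two bounds separately. The upper bound $\ex(n,\{C_3,\dots,C_{2k}\})=O(n^{1+1/k})$ is immediate from Theorem~\ref{BS}: forbidding $C_{2k}$ alone already gives this bound, and additionally forbidding the shorter cycles only lowers the extremal number. So the whole content is the matching lower bound, i.e.\ constructing, for every $n$, a graph on $n$ vertices of girth larger than $2k$ with $\Omega(n^{1+1/k})$ edges.

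For the lower bound the first thing to try is the incidence-geometry construction that settles the solved cases $k\in\{2,3,5\}$: take the (bipartite) incidence graph of a generalized $(k+1)$-gon of order $(q,q)$. Such a graph is $(q+1)$-regular, has girth exactly $2(k+1)>2k$, and has about $n=2(q^{k}+q^{k-1}+\dots+1)$ vertices, so it carries $\Theta(q^{k+1})=\Theta(n^{1+1/k})$ edges; choosing a prime $q$ of roughly the right size (Bertrand's postulate) and deleting a few vertices lets one reach an arbitrary $n$. For general $k$ one would hope to replace this by some other dense graph of high girth --- either another highly symmetric algebraic object, or a pseudorandom/random construction such as a random subgraph of a well-chosen Cayley graph or a random lift of a small base graph --- that simultaneously avoids all cycles of length at most $2k$ and keeps $\Omega(n^{1+1/k})$ edges.

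The main obstacle --- and the reason this is a long-standing open problem --- is that by the Feit--Higman theorem generalized polygons of order $(q,q)$ with $q>1$ exist only for the few values giving $k\in\{2,3,5\}$, and no other family of graphs is known that is at once this dense and this locally sparse. A probabilistic deletion argument starting from a $C_{2k}$-free host graph with $\Theta(n^{1+1/k})$ edges loses a polynomial factor when the short cycles are destroyed, so it falls short of the conjectured exponent. Consequently I would not expect to prove the conjecture in full; a realistic goal is either a new value of $k$, or --- as pursued in the rest of this paper --- the analogous statement for \emph{ordered} graphs, where the extra combinatorial structure of a vertex order makes suitably dense high-girth constructions available for all $k$.
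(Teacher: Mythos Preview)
Your assessment is correct: this statement is presented in the paper as a \emph{conjecture}, not a theorem, and the paper does not prove it. The paper explicitly notes that ``this conjecture is only known to hold for $k=2,3,5$'' and uses it purely as motivation for the ordered analog (Theorem~\ref{thm:borderedgirth}). Your analysis of the situation --- the upper bound being immediate from Bondy--Simonovits, the lower bound for the known cases coming from generalized polygons, and the Feit--Higman obstruction to extending that construction --- is accurate and matches the standard understanding of why Conjecture~\ref{conj:es} remains open.
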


Once again, this conjecture is only known to hold for $k=2,3,5$. For a survey on the extremal number of bipartite graphs, we refer the reader to \cite{FS2013}.

\subsection{Ordered graphs and forbidden submatrices}

An \emph{ordered graph} is a simple graph $G=(V,E)$ with a linear ordering on its vertex set. We say that the ordered graph $H$ is an \emph{ordered subgraph} of $G$ if there is an embedding of $H$ in $G$ that respects the ordering of the vertices. The Tur\'an problem for a set of ordered graphs $\h$ asks the following. What is the maximum number $\ex_<(n,\h)$ of edges that an ordered graph on $n$ vertices can have without containing any $H\in\h$ as an ordered subgraph? When $\h$ contains a single ordered graph $H$, we simply write $\ex_<(n,H)$.

The systematic study of this problem was initiated by Pach and Tardos \cite{PT2006}. They noted that the following analog of the Erd\H{o}s--Stone--Simonovits result holds (see also \cite{BKV2003}):
\[
\ex_<(n,H) = \left(1-\frac{1}{\chi_<(H)-1}\right)\frac{n^2}{2} + o(n^2),
\]
where $\chi_<(H)$, the interval chromatic number of $H$, is the minimum number of intervals the (linearly ordered) vertex set of $H$ can be partitioned into, so that no two vertices belonging to the same interval are adjacent in $H$. This formula determines the asymptotics of the ordered Tur\'an number, except when $\chi_<(H)=2$.

The case $\chi_<(H)=2$ turns out to be closely related to a well-studied problem of forbidden patterns in 0-1 matrices. To formulate it, let $A_H$ be the bipartite adjacency matrix of $H$, where rows and columns correspond to the two intervals of $H$ (in the appropriate ordering), and 1-entries correspond to edges in $H$. Then we are interested in the maximum number of 1-entries in an $n\times m$ matrix that does not contain the pattern $A_H$ in the sense that $A_H$ is not a submatrix, nor can it be obtained from a submatrix by changing some 1-entries to 0-entries.

The problem of finding the extremal number of matrix patterns was introduced by F\"uredi and Hajnal \cite{FH1992} about 25 years ago, and several results have been obtained since then (see e.g. \cite{KTTW,MT04,PT2006,T05} and the references therein), although most of them concern matrices of \emph{acyclic} graphs. One notable exception is a result of Pach and Tardos \cite{PT2006} that establishes $\ex_<(n,\h)=\Theta(n^{4/3})$ for an infinite set of ordered cycles $\h$ that they call ``positive'' cycles. The definition of positive cycles is motivated by an incidence geometry problem, where they correspond to a class of forbidden configurations.

\medskip
In this paper we estimate $\ex_<(n,\h)$ for various finite sets $\h$ of ordered cycles that all come from the class of bordered cycles that we define as follows.
In an ordered graph with interval chromatic number 2 and intervals $U$ and $V$ ($U$ preceding $V$), we call the edge connecting the first vertex of $U$ and the last vertex of $V$ an \emph{outer border}, and the edge connecting the last vertex of $U$ and the first vertex of $V$ an \emph{inner border}. Then a \emph{bordered cycle} is an ordered cycle with interval chromatic number 2 that contains both an inner and an outer border. For example, out of the six ordered bipartite 6-cycles, three are bordered (see Figure~\ref{figure1}).

Let us emphasize that there is no containment relationship between our bordered cycles and the positive cycles of Pach and Tardos. For example, using the notation of Figure~\ref{figure1}, the positive 6-cycles are $C_6^1,C_6^3,C_6^I$ and $C_6^O$, whereas the bordered 6-cycles are $C_6^1,C_6^2$ and $C_6^3$.

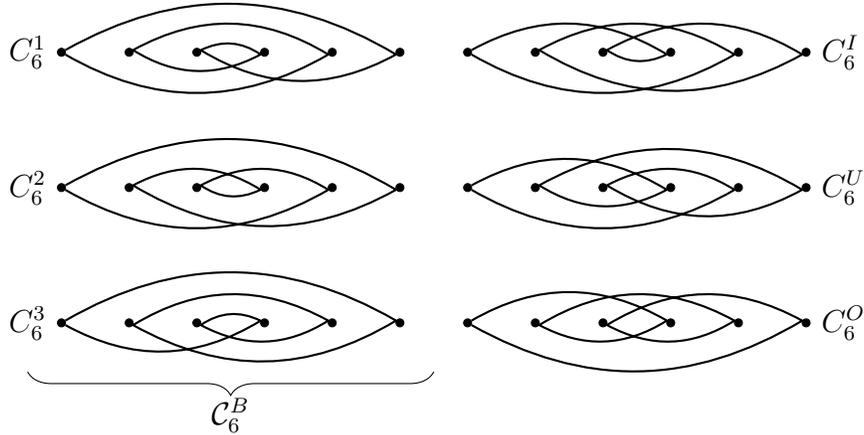
\begin{figure}[h]
\begin{center}
\begin{tikzpicture}[scale=.9]
\node[vertex,label=left:{$C_6^3$}] (A1) at (1,0) {};
\node[vertex] (B1) at (2,0) {};
\node[vertex] (C1) at (3,0) {};
\node[vertex] (D1) at (4,0) {};
\node[vertex] (E1) at (5,0) {};
\node[vertex] (F1) at (6,0) {};

\node[vertex,label=left:{$C_6^2$}] (A2) at (1,2) {};
\node[vertex] (B2) at (2,2) {};
\node[vertex] (C2) at (3,2) {};
\node[vertex] (D2) at (4,2) {};
\node[vertex] (E2) at (5,2) {};
\node[vertex] (F2) at (6,2) {};

\node[vertex,label=left:{$C_6^1$}] (A3) at (1,4) {};
\node[vertex] (B3) at (2,4) {};
\node[vertex] (C3) at (3,4) {};
\node[vertex] (D3) at (4,4) {};
\node[vertex] (E3) at (5,4) {};
\node[vertex] (F3) at (6,4) {};

\node[vertex] (A4) at (7,0) {};
\node[vertex] (B4) at (8,0) {};
\node[vertex] (C4) at (9,0) {};
\node[vertex] (D4) at (10,0) {};
\node[vertex] (E4) at (11,0) {};
\node[vertex,label=right:{$C_6^O$}] (F4) at (12,0) {};

\node[vertex] (A5) at (7,2) {};
\node[vertex] (B5) at (8,2) {};
\node[vertex] (C5) at (9,2) {};
\node[vertex] (D5) at (10,2) {};
\node[vertex] (E5) at (11,2) {};
\node[vertex,label=right:{$C_6^U$}] (F5) at (12,2) {};

\node[vertex] (A6) at (7,4) {};
\node[vertex] (B6) at (8,4) {};
\node[vertex] (C6) at (9,4) {};
\node[vertex] (D6) at (10,4) {};
\node[vertex] (E6) at (11,4) {};
\node[vertex,label=right:{$C_6^I$}] (F6) at (12,4) {};

\draw[thick] (A1) to[bend left] (F1) to[bend left] (B1) to[bend left] (E1) 
				 to[bend left] (C1) to[bend left] (D1) to[bend left] (A1);
\draw[thick] (A2) to[bend right] (E2) to[bend right] (C2) to[bend right] (D2)
				 to[bend right] (B2) to[bend right] (F2) to[bend right] (A2);
\draw[thick] (A3) to[bend right] (E3) to[bend right] (B3) to[bend right] (D3)
				 to[bend right] (C3) to[bend right] (F3) to[bend right] (A3);
\draw[thick] (A4) to[bend left] (D4) to[bend left] (B4) to[bend left] (E4)
				  to[bend left] (C4) to[bend left] (F4) to[bend left] (A4);
\draw[thick] (A5) to[bend right] (E5) to[bend right] (C5) to[bend right] (F5)
				 to[bend right] (B5) to[bend right] (D5) to[bend right] (A5);
\draw[thick] (A6) to[bend right] (E6) to[bend right] (B6) to[bend right] (F6)
				  to[bend right] (C6) to[bend right] (D6) to[bend right] (A6);

\draw[decoration={brace,mirror,amplitude=9pt,raise=-2pt},decorate] (.5,-.8) -- node[below=4pt] {$\C_6^B$} (6.5,-.8);

\end{tikzpicture}
	\caption{Bordered ($\C_6^B=\{C_6^1,C_6^2,C_6^3\}$), outbordered ($C_6^O$), unbordered ($C_6^U$) and inbordered ($C_6^I$) 6-cycles.
    Orderings shown in the same row can be obtained from each other by reversing the order of vertices in the second interval.
    }
     \label{figure1}
\end{center}
\end{figure}

\subsection{Our results}

Let $\C^B_{2k}$ be the set of bordered $2k$-cycles. Our main result determines the asymptotics of the maximum number of edges in an ordered graph with no bordered cycle of length up to $2k$. This can be thought of as an analog of Conjecture \ref{conj:es} for bordered cycles.

\begin{theorem} \label{thm:borderedgirth}
For every fixed integer $k>1$, 
\[  \ex_<(n,\{\C^B_4,\C^B_6,\dots,\C^B_{2k}\})=\Theta(n^{1+1/k}).  \]
\end{theorem}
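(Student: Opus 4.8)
The upper bound $\ex_<(n,\{\C^B_4,\dots,\C^B_{2k}\})=O(n^{1+1/k})$ should follow from a careful adaptation of the Bondy--Simonovits argument (Theorem~\ref{BS}), or rather from a BFS/breadth-first-search style argument exploiting the ordering. The key observation is that in an ordered graph with many edges, after passing to a subgraph of large minimum degree, one can build a BFS tree and look for a short cycle; the ordering lets us control which cycle orderings appear. Concretely, I would first reduce to a bipartite-like setting by taking a subgraph where every edge "goes forward" a controlled amount, or split the vertex set into two intervals and work with the bipartite graph between them; then argue that a graph with $\gg n^{1+1/k}$ edges and no cycle of length $\le 2k$ at all is already impossible by Bondy--Simonovits, but we need the stronger statement that forbidding only the \emph{bordered} orderings suffices. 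The crucial point is that when we find \emph{any} short even closed walk/cycle in the host ordered graph, we should be able to rechoose the embedding so that the extreme vertices of the two intervals (the first vertex of $U$ and last of $V$, and the last of $U$ and first of $V$) are used — i.e. a minimal or extremal cycle is automatically bordered. I expect this to require choosing the cycle greedily: take the edge spanning the longest "interval" and show a shortest cycle through a carefully chosen pair of vertices must use both borders.

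**For the lower bound** $\ex_<(n,\{\C^B_4,\dots,\C^B_{2k}\})=\Omega(n^{1+1/k})$, the plan is to exhibit an explicit ordered graph on $n$ vertices with $\Omega(n^{1+1/k})$ edges containing no bordered cycle of length $\le 2k$. A natural candidate is to take a known extremal (or near-extremal) $C_{2k}$-free-ish construction — or better, a construction with high girth such as the incidence graph of a generalized polygon when $k\in\{2,3,5\}$, but since we need \emph{all} $k$, we instead want a construction that is sparse enough to avoid short bordered cycles but dense enough overall. The key idea: a bordered cycle uses \emph{both} the inner and outer border, which is a very rigid constraint on the ordering. So I would design the ordering so that no short cycle can simultaneously contain an edge from the "outermost" pair and an edge from the "innermost" pair of its interval pair. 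One clean approach: partition $[n]$ into blocks and use a blow-up / recursive construction, or use a bipartite-incidence-type construction coming from a $k$-dimensional structure (e.g., points and "boxes" or a Sidon-type / $B_k$-set construction) where the natural ordering forces any short cycle to stay "local" and hence cannot reach both borders. Another candidate is to take the ordered graph where $i\sim j$ iff $j-i$ lies in a suitable $B_{k-1}$ or Sidon-type set; cycles then correspond to vanishing signed sums of differences, and the ordering constraint of being bordered forces an incompatible set of sign/size constraints.

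**The main obstacle** I anticipate is the lower bound: we need a single construction that works \emph{uniformly in $k$} and achieves the full $n^{1+1/k}$ density while provably avoiding \emph{every} bordered ordering of \emph{every} even cycle up to length $2k$ — not just $2k$-cycles. Since the general $C_{2k}$ density $n^{1+1/k}$ is not known to be achievable even without the ordering (for $k\neq 2,3,5$), the ordered/bordered restriction must be what buys us the extra freedom, and pinning down exactly why bordered cycles are avoidable by a dense construction (while arbitrary short cycles are not) is the heart of the matter. I would try to leverage the fact that avoiding bordered cycles is equivalent, via the bipartite adjacency matrix, to avoiding a specific family of $0$-$1$ matrix patterns whose first and last columns (and rows) are both "hit"; such patterns should be avoidable by matrices built from a layered/interval structure, e.g., a direct-sum or tensor-type construction of small dense blocks, whose total weight is $\Omega(n^{1+1/k})$ by choosing block sizes around $n^{1-1/k}$. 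Verifying the pattern-avoidance for all shorter cycles simultaneously — controlling the interaction of the border constraint with cycle length — is the step I expect to be the most delicate, and likely where an inductive or recursive construction on $k$ is needed, with the base case $k=2$ handled directly.
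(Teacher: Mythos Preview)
Your lower-bound instinct is on target: one of your listed candidates --- take $i\sim j$ iff the difference lies in a generalized Sidon set --- is exactly what the paper does (with a $B_k$-set, not $B_{k-1}$). With $S\subseteq[n]$ a $B_k$-set of size $\Omega(n^{1/k})$, set $A_G(i,j)=1$ iff $i-j+n\in S$. Along any $2l$-cycle ($l\le k$) the values $i_s-j_s+n$ split into two equal $l$-sums in $S$, hence coincide as multisets; but a border edge attains the \emph{unique} extremum of $i_s-j_s$ over the cycle and so cannot be matched. None of the recursive or block constructions you worry about are needed, and contrary to your expectation the lower bound is the easy half.

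The genuine gap is in your upper-bound plan. A BFS/Bondy--Simonovits argument produces \emph{some} short even cycle, but you give no mechanism for ``rechoosing the embedding'' so that both borders are present --- this is exactly the step you flag as crucial, and it does not follow from greedily picking a longest edge. The paper does not go via Bondy--Simonovits at all. Instead it defines a \emph{$k$-zigzag path} $v_0v_1\cdots v_k$ satisfying $v_k<v_{k-2}<\cdots<v_0<v_1<\cdots<v_{k-1}$ (for $k$ even; the odd case is analogous) and observes that two distinct $k$-zigzag paths with common endpoints, diverging at index $s$ and rejoining at index $t>s$, form a cycle of length $2(t-s)\le 2k$ that is \emph{automatically} bordered: the borders are $v_s\min\{v_{s+1},v'_{s+1}\}$ and $\max\{v_{t-1},v'_{t-1}\}v_t$, forced by the zigzag order. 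Hence there are at most $n^2$ $k$-zigzag paths. A separate counting argument (iteratively delete, $k-1$ times, each vertex's $\lfloor m/2kn\rfloor$ smallest and largest neighbours, then greedily extend every surviving edge) shows there are at least $m^k/(k^k(3n)^{k-1})$ of them, giving $m=O(n^{1+1/k})$. The zigzag-path device --- baking the border constraint into the \emph{definition} of the paths one counts, rather than trying to impose it post hoc on a found cycle --- is the key idea your plan is missing.
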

\noindent

We do not know whether the bordered version of Theorem~\ref{BS} is true in general, i.e., if forbidding $\C^{B}_{2k}$ alone suffices to get the same asymptotic upper bound for the extremal number. However, we can prove this for $k=3$.

\begin{theorem} \label{thm:hexagon}
For bordered 6-cycles,
\[  \ex_<(n,\C^B_6)=\Theta(n^{4/3}).  \]
\end{theorem}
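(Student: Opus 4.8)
The lower bound $\ex_<(n,\C^B_6)=\Omega(n^{4/3})$ is immediate from Theorem~\ref{thm:borderedgirth} with $k=3$ (any ordered graph with no bordered cycle of length $4$ or $6$ certainly has no $\C^B_6$, so the construction there works), or alternatively from a suitable ordering of the incidence graph of a generalized quadrangle. So the content is the upper bound $\ex_<(n,\C^B_6)=O(n^{4/3})$, and this is where essentially all the work lies.

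The plan is to argue directly at the level of the bipartite adjacency matrix, following the translation described in the introduction: a bordered $6$-cycle corresponds to forbidding, in an $n \times n$ $0$--$1$ matrix, each of the three patterns $A_{C_6^1}, A_{C_6^2}, A_{C_6^3}$ of Figure~\ref{figure1}. First I would set up the standard reduction from an ordered graph $G$ on $n$ vertices with interval chromatic number at most $2$ (the general case follows by a factor-$2$ loss after splitting the order into two halves — though some care is needed so that forbidden \emph{ordered} cycles may span the two halves; the cleaner route is to work with an $n\times n$ matrix from the outset and accept that edges inside an interval only help us). Then, assuming $G=(U,V)$ has $cn^{4/3}$ edges with $c$ large, I want to locate a copy of one of $C_6^1, C_6^2, C_6^3$.

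The heart of the argument should be a \emph{dyadic / interval-decomposition} scheme on the vertex order, of the kind used in forbidden-submatrix arguments (as in Pach--Tardos and in the $C_6$ work of Füredi--Naor--Verstraëte). The key structural observation to exploit is what the inner and outer borders mean: in a potential $6$-cycle with vertices $u_1<u_2<u_3$ in $U$ and $v_1<v_2<v_3$ in $V$, an outer border is the edge $u_1 v_3$ and an inner border is the edge $u_3 v_1$; the three bordered orderings are exactly those containing \emph{both}. Concretely I would: (i) pass to a subgraph of min-degree $\gg n^{1/3}$, losing only a constant factor; (ii) for a typical vertex $u$, consider its neighborhood $N(u)\subseteq V$ and the ``first'' and ``last'' elements of various neighborhoods, and build, for each pair $u<u'$ in $U$, a weighted bipartite auxiliary structure recording whether $u$ and $u'$ have common neighbors that are positioned so as to complete a border; (iii) by convexity/Kővári--Sós--Turán-type counting on pairs, find many pairs $(u,u')$ with $\ge 3$ common neighbors, and then — using the ordering — extract among three common neighbors $v_1<v_2<v_3$ the third vertex $u''$ needed so that the resulting $6$-cycle has the right border structure. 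The delicate combinatorial point is that having a $C_6$ as an \emph{unordered} subgraph does not force one of our three orderings; one must instead run a more careful ``path of length $2$'' count in which the two endpoints of the $2$-paths are always the extreme (first/last) neighbors, so that any closing edge automatically supplies a border, and then show two such bordered ``cherries'' can be glued while preserving the second border.

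The step I expect to be the main obstacle is precisely this gluing: ensuring that \emph{both} an inner and an outer border appear simultaneously, rather than just one. A single border is easy to force (it essentially follows from a one-sided extremal count), but avoiding all three of $C_6^1,C_6^2,C_6^3$ while still permitting $C_6^I, C_6^O, C_6^U$ means the adversary can keep one border and destroy the other, so the counting has to be genuinely two-dimensional in the order. I would handle this by a two-stage defect argument: first delete, greedily, edges that are ``sandwiched'' (neither first nor last in either endpoint's neighborhood within a suitable dyadic block), showing $O(n^{4/3})$ edges are removed this way because each such deletion can be charged to a pair with bounded multiplicity; then on the remaining ``boundary'' graph every common neighbor of a pair is automatically a border on one side, and a final Kővári--Sós--Turán count on the boundary graph produces a pair with two common neighbors that are simultaneously first-type and last-type, yielding a bordered $6$-cycle once a third vertex is attached. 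Making the charging in the first stage tight — so that only $O(n^{4/3})$, not $\omega(n^{4/3})$, edges are shed — is the crux, and it is what pins down $k=3$ (the $\{C_3,\dots,C_6\}$-analog, Theorem~\ref{thm:borderedgirth}, sidesteps this because forbidding the shorter bordered $4$-cycles already kills the sandwiched edges for free).
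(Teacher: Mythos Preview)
Your proposal misses a much simpler route and leaves its own hardest step unjustified. The paper derives the upper bound in two lines from Theorems~\ref{thm:borderedgirth} and~\ref{thm:c2lfree}: given a $\C^B_6$-free ordered graph $G$, apply Theorem~\ref{thm:c2lfree} with $k=3$, $l=2$ to obtain a $\C^B_4$-free subgraph $G'$ retaining at least half the edges of $G$; since $G'$ is then $\{\C^B_4,\C^B_6\}$-free, Theorem~\ref{thm:borderedgirth} gives $|E(G')|=O(n^{4/3})$, hence $|E(G)|=O(n^{4/3})$. The mechanism behind Theorem~\ref{thm:c2lfree} in this case is exactly the phenomenon you allude to in your final sentence: two bordered $4$-cycles, one's inner border coinciding with the other's outer border, glue to a bordered $6$-cycle; so in the auxiliary digraph on $E(G)$ with an arc from the outer to the inner border of each bordered $C_4$ there is no directed path of length $2$, Gallai--Roy $2$-colors it, and either color class is $\C^B_4$-free.

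Your own two-stage plan, by contrast, is not a proof as it stands. The first-stage charging---that greedily deleting ``sandwiched'' edges removes only $O(n^{4/3})$ of them because ``each such deletion can be charged to a pair with bounded multiplicity''---is asserted with no argument, and you do not say what pair is meant or why the multiplicity is bounded. You yourself flag this as ``the crux'' and ``what pins down $k=3$'': it is precisely the step that Theorem~\ref{thm:c2lfree} handles cleanly and globally, so attempting to reinvent it through an ad hoc dyadic deletion scheme is both harder and unnecessary.
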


Actually, Theorem \ref{thm:hexagon} is an immediate consequence of Theorem \ref{thm:borderedgirth} and the fact that when $l-1$ divides $k-1$, then any $\C^{B}_{2k}$-free ordered graph contains a large $\C^{B}_{2l}$-free subgraph. This is established by the following theorem.

\begin{theorem}
\label{thm:c2lfree}
Let $k, l \ge 2$ be integers such that $k-1$ is divisible by $l-1$. Then any $\C^{B}_{2k}$-free ordered graph $G$ contains a $\C^{B}_{2l}$-free subgraph with at least $\frac{l-1}{k-1}$ fraction of the edges of $G$.
\end{theorem}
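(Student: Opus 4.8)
The plan is to partition the edge set of $G$ into at most $t=\frac{k-1}{l-1}$ subgraphs that are each $\C^{B}_{2l}$-free --- here $t$ is a positive integer by hypothesis. Once this is done, one of the parts has at least $|E(G)|/t=\frac{l-1}{k-1}|E(G)|$ edges, which is exactly the bound claimed. I would obtain such a partition by greedy peeling: let $H_1$ be an edge-maximal $\C^{B}_{2l}$-free subgraph of $G_1:=G$, and for $j\ge 2$ let $H_j$ be an edge-maximal $\C^{B}_{2l}$-free subgraph of $G_j:=G\setminus(E(H_1)\cup\cdots\cup E(H_{j-1}))$, stopping at the first index $m$ for which $G_m$ is itself $\C^{B}_{2l}$-free, so that $E(G)=E(H_1)\sqcup\cdots\sqcup E(H_m)$. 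If $m\le t$ we are done by averaging, so from now on assume $m\ge t+1$, and let us derive a contradiction by producing a bordered $2k$-cycle inside $G$.

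The key is to build, using maximality repeatedly, a chain $C^{(1)},\dots,C^{(t)}$ of bordered $2l$-cycles. Pick any edge $e_1\in E(H_{t+1})$ (this layer is nonempty since $m\ge t+1$). As $e_1\in E(G_t)\setminus E(H_t)$, edge-maximality of $H_t$ forces a bordered $2l$-cycle $C^{(1)}\subseteq H_t+e_1$ through $e_1$, all of whose other edges lie in $H_t$; since $l\ge 2$, at least one of these is a non-border edge of $C^{(1)}$, call it $e_2$. Now $e_2\in E(G_{t-1})\setminus E(H_{t-1})$, so edge-maximality of $H_{t-1}$ forces a bordered $2l$-cycle $C^{(2)}\subseteq H_{t-1}+e_2$ through $e_2$ with all other edges in $H_{t-1}$; pick a non-border edge $e_3$ of $C^{(2)}$ in $H_{t-1}$ and continue. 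This yields bordered $2l$-cycles $C^{(1)},\dots,C^{(t)}$ such that $C^{(i)}$ has exactly one edge, namely $e_i$, in layer $H_{t-i+2}$ and its remaining $2l-1$ edges in $H_{t-i+1}$, with $e_{i+1}$ a non-border edge of $C^{(i)}$. Comparing layers then gives $E(C^{(i)})\cap E(C^{(i+1)})=\{e_{i+1}\}$ for every $i$ and $E(C^{(i)})\cap E(C^{(j)})=\emptyset$ whenever $|i-j|\ge 2$. Consequently the symmetric difference $D:=C^{(1)}\,\triangle\,C^{(2)}\,\triangle\cdots\triangle\,C^{(t)}$, being a sum of cycles in the cycle space, is an edge-disjoint union of cycles, and it has exactly $\sum_{i}|E(C^{(i)})|-2(t-1)=2tl-2(t-1)=2k$ edges; moreover, since every shared edge $e_i$ is a non-border, the inner and outer borders of $C^{(1)}$ and of $C^{(t)}$ are still present in $D$.

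What remains --- and what I expect to be the main obstacle --- is to show that $D$ is a \emph{single} bordered $2k$-cycle, rather than a disjoint union of several shorter cycles, and that it has interval chromatic number $2$. Here one must exploit the freedom still available in the construction (the choice of $e_1$, of each cycle $C^{(i)}$ among those provided by maximality, and of each $e_{i+1}$ among the non-border edges of $C^{(i)}$ in the appropriate layer). I would fix these choices by an extremal rule --- for example, take $e_{i+1}$ to span as few vertices of $G$ as possible and then $C^{(i+1)}$ to span as few vertices as possible among the admissible bordered $2l$-cycles --- and then show that any ``bad'' configuration (two of the $C^{(i)}$ meeting in a vertex outside their common edge, or the left/right interval pair of one $C^{(i)}$ interleaving with that of another) could be eliminated by rerouting along the two cycles involved, producing a bordered cycle on strictly fewer vertices or of strictly smaller length, contrary to the extremal choice. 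An equivalent route is induction on $t$, whose inductive step is the case $t=2$: gluing a bordered $2a$-cycle and a bordered $2b$-cycle that share exactly one non-border edge and are otherwise edge-disjoint into a bordered $2(a+b-1)$-cycle, the crux again being to position the newly attached cycle inside the ``gap'' of the other so that their interval structures are compatible. Once $D$ is recognised as a bordered $2k$-cycle, it is a subgraph of $G$, contradicting $\C^{B}_{2k}$-freeness; hence $m\le t$ and the theorem follows.
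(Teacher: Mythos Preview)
Your peeling-and-gluing strategy is quite different from the paper's argument, and the step you flag as the main obstacle is a genuine gap that your proposed extremal rule does not close. Even when the symmetric difference $D$ is a single $2k$-cycle of interval chromatic number $2$, it need not be \emph{bordered}: the borders of $C^{(1)}$ and $C^{(t)}$ surviving in $D$ is not the same as $D$ having its own borders. For a concrete failure with $l=2$, $t=2$, take $C^{(1)}$ on vertices $1<3<5<7$ with edges $\{15,17,35,37\}$ and non-border glue edge $e_2=37$, and $C^{(2)}$ on $3<4<7<8$ with edges $\{37,38,47,48\}$. Then $D=\{15,17,35,38,47,48\}$ is a single $6$-cycle with intervals $\{1,3,4\}$ and $\{5,7,8\}$, yet neither its outer border $18$ nor its inner border $45$ is an edge of $D$. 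Moreover, your construction may \emph{force} this situation: if $e_1$ itself is the other non-border of $C^{(1)}$, there is no alternative choice of $e_2$, and the bordered $4$-cycle through $e_2$ produced by maximality of $H_1$ can be exactly this $C^{(2)}$. The deeper problem is that by linking through non-border edges you lose all control over the relative position of consecutive cycles; vertex collisions between non-adjacent $C^{(i)}$'s (which your layer argument only rules out on the edge level) are another manifestation of the same issue.

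The paper avoids all of this by building in a nesting structure from the outset. It defines an acyclic digraph $H$ on $E(G)$ with an arc $f\to f'$ whenever some bordered $2l$-cycle has outer border $f$ and inner border $f'$; then if $f=ab$ and $f'=a'b'$ one has $a<a'<b'<b$, so any directed path $f_1\to\cdots\to f_{h+1}$ gives nested intervals. Consecutive cycles $C_i$ (with outer border $f_i$ and inner border $f_{i+1}$) then meet only in the two endpoints of $f_{i+1}$, and $\bigl(\bigcup_i C_i\bigr)\setminus\{f_2,\dots,f_h\}$ is automatically a single bordered $2k$-cycle when $h=\frac{k-1}{l-1}$. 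Thus the longest directed path in $H$ has length at most $h-1$, and the Gallai--Roy theorem yields a proper $h$-colouring of $H$; the largest colour class is an independent set of size at least $\frac{l-1}{k-1}|E(G)|$ in which no bordered $2l$-cycle can have both its borders, hence none at all. The key idea you are missing is precisely this inner-border-to-outer-border linkage, which guarantees the clean gluing for free.
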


Note that for $l = 2$, Theorem \ref{thm:c2lfree} is a generalization of a theorem of K\"{u}hn and Osthus \cite{kuhn_osthus} which states that any bipartite $C_{2k}$-free graph $G$ contains a $C_4$-free subgraph with at least $1/(k-1)$ fraction of the edges of $G$. Indeed, if we order the vertices of $G$ so that all of the vertices in one of its color classes appear before the vertices of the other color class, then any $C_4$ in $G$ is bordered. Then Theorem \ref{thm:c2lfree} gives a $C_4$-free subgraph of $G$ that has at least $1/(k-1)$ fraction of the edges of $G$.

\medskip
This paper is organized as follows. In Section~\ref{sec:lowerbound} we prove the lower bound of Theorem~\ref{thm:borderedgirth} by constructing a dense ordered graph without short ordered cycles. The matching upper bound is shown in Section~\ref{sec:upperbound}. In Section~\ref{sec:c2lfree} we give a short proof of Theorem~\ref{thm:c2lfree}. We conclude the paper with some remarks and open problems in Section~\ref{sec:remarks}.

\section{Lower bound construction}
\label{sec:lowerbound}

Our construction is based on generalized Sidon sets defined as follows.

\begin{definition}
Let $k \ge 2$ be an integer. A set of integers $S$ is called a $B_k$-set if all $k$-sums of elements in $S$ are different, i.e., if for every integer $C$, there is at most one solution to
\[ x_1+x_2+\dots+x_k = C \]
in $S$, up to permuting the elements $x_i$ (the $x_i$ need not be distinct).\newline
We denote the maximum size of a $B_k$-set $S\subs \{1,2,\dots,n\}$ by $F_k(n)$.
\end{definition}
Note that this definition implies that if $x_1+x_2+\dots+x_l = x'_1+x'_2+\dots+x'_l$ for some $l \le k$ and $x_i, x'_i \in S$, then $\{x_1, x_2, \dots, x_l \} = \{x'_1, x'_2, \dots, x'_l\}$  as multisets.

Bose and Chowla \cite{BC1962} proved that 
\[ F_k(n) \ge n^{1/k} + o(n^{1/k}). \]

For a fixed $n \ge 1$, let $S \subset \{1,2,...,n\}$ be a $B_k$-set of size $\abs{S} = F_k(n)$.
Our construction will be an ordered graph $G$ on $4n$ vertices that we define through its bipartite adjacency matrix $A_G \in \{0,1\}^{2n\times 2n}$ as follows. For $1 \le i, j \le 2n$ we put 
\[
A_G(i,j) =\left\{
	\begin{array}{ll}
		1  & \mbox{if } i-j+n \in S \text{ and $1 \le i \le n$}\\
		0 &  \text{ otherwise.}
	\end{array}
\right.
\]

We will prove that $G$ contains no $2l$-cycle with a border edge for any $l\le k$. Note that the edges of a $2l$-cycle correspond to 1-entries in $S$ at coordinates $(i_1,j_1),\dots,(i_{2l},j_{2l})$, where
\begin{enumerate}
\item $i_{2s-1} = i_{2s}$ and $j_{2s-1} \ne j_{2s}$ for $s=1,2,\dots,l,$ and
\item $j_{2s} = j_{2s+1}$ and $i_{2s} \ne i_{2s+1}$ for $s=1,2,\dots,l$ (taking indices modulo $2l$).
\end{enumerate}
This readily implies  $\sum_{s=1}^{l} i_{2s}=\sum_{s=1}^{l} i_{2s-1}$ and $\sum_{s=1}^{l} j_{2s}=\sum_{s=1}^{l} j_{2s-1}$, and in particular
\begin{equation}
\label{eq:lhs_rhs}
\sum_{s=1}^{l} (i_{2s}-j_{2s}+n)= \sum_{s=1}^{l} (i_{2s-1}-j_{2s-1}+n).
\end{equation}

By the definition of $A_G$, we know that $i_s-j_s+n \in S$ for every $1 \le s \le 2l$, so both sides of \eqref{eq:lhs_rhs} are $l$-sums in $S$. But $S$ was a $B_k$-set with $l\le k$, so by the observation above, the two sums must have the same terms (possibly in a different order).

However, an outer (resp. inner) border of this cycle uniquely minimizes (resp. maximizes) $i_s-j_s$ over all $s=1,2,\dots,2l$ so if our cycle has a border edge, then there is a unique number among the terms, a contradiction.

Therefore, $G$ does not contain any cycle of length at most $2k$ with a border edge, and it is easy to see that it contains $nF_k(n)\ge n^{1 + 1/k} + o(n^{1+1/k})$ edges. This proves the lower bound of Theorem~\ref{thm:borderedgirth}.

\section{Upper bound}
\label{sec:upperbound}

Let $G=(V,E)$ be an ordered graph on the vertex set $V=\{x_{1}<x_{2}<\dots<x_{n}\}$ that avoids all cycles in $\C^{B}_{4},\dots,\C^{B}_{2k}$. We want to show that the number of edges $m$ in $G$ is $O(n^{1+1/k})$. Let us call a path $P=v_{0}v_{1}\dots v_{k}$ \emph{$k$-zigzag}, if $v_k<v_{k-2}<\dots<v_0<v_1<v_3<\dots<v_{k-1}$ for $k$ even, and $v_{k-1}<v_{k-3}<\dots<v_0<v_1<v_3<\dots<v_k$ for $k$ odd.
	
\begin{claim}\label{uniquepath}
The graph $G$ contains at most one $k$-zigzag path between any pair of vertices.
\end{claim}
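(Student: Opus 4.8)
The plan is to argue by contradiction: suppose there are two distinct $k$-zigzag paths $P$ and $Q$ between the same pair of endpoints $\{a,b\}$. I would first observe that, since $P$ and $Q$ share their endpoints but are distinct, their symmetric difference $P\triangle Q$ contains a cycle $C$ in $G$. The key point is that $C$ has even length (at most $2k$), because the zigzag condition forces a very rigid interval structure: reading off the pattern of inequalities $v_k<v_{k-2}<\dots<v_0<v_1<v_3<\dots<v_{k-1}$, every edge of a $k$-zigzag joins a vertex in the ``lower half'' $\{v_0,v_2,\dots\}$ to a vertex in the ``upper half'' $\{v_1,v_3,\dots\}$ with respect to the natural split; in particular each zigzag path (hence any cycle formed from two of them) has interval chromatic number $2$. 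So $C\in\{C_4,C_6,\dots,C_{2k}\}$ as an unordered cycle, and it remains to check that $C$, with the ordering inherited from $G$, is \emph{bordered} — that it contains both an inner and an outer border with respect to its own two intervals $U$ (lower) and $V$ (upper).

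To see the border edges appear, I would zoom in on the extreme vertices of $C$. Let $u$ be the smallest vertex of $C$ and $w$ the largest. Because $P$ and $Q$ are both zigzags on the endpoint set $\{a,b\}$, one checks that the smallest vertex of the whole configuration is $a$ (the common endpoint playing the role of $v_0$ in both paths, i.e.\ the minimum of the lower half) — more precisely, $a$ is smaller than every internal vertex of either path. The crucial structural fact is that in a $k$-zigzag, $v_0$ (the global minimum) is adjacent to $v_1$ (the global maximum); so \emph{both} zigzags contain the edge $ab$ joining the overall-minimum vertex to the overall-maximum vertex. Hmm — but if both $P$ and $Q$ contain the edge $ab$, that edge cancels in $P\triangle Q$, so I instead want to locate the minimum and maximum \emph{of the cycle $C$ itself}. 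I would show: among the vertices of $C$, the minimum vertex lies in $U$ and is adjacent within $C$ to the maximum vertex (which lies in $V$) — this edge is the outer border of $C$; and symmetrically the largest vertex of $U\cap V(C)$ is joined within $C$ to the smallest vertex of $V\cap V(C)$, giving the inner border. Establishing these two adjacencies is the heart of the argument, and it should follow from the observation that, along either zigzag, the lower-half vertices are encountered in decreasing order and the upper-half in increasing order, so the ``turning'' structure near the extremes of any sub-configuration forces exactly the border edges.

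The main obstacle I anticipate is precisely this bookkeeping: verifying that no matter how the two zigzags $P$ and $Q$ interleave, the cycle $C$ extracted from $P\triangle Q$ inherits \emph{both} a minimal-to-maximal edge (outer border) and a last-of-$U$-to-first-of-$V$ edge (inner border), and that the interval chromatic number really is $2$ rather than something degenerate. A clean way to handle this is to set up coordinates: identify the lower half of the ambient order with an interval $U$ and the upper half with an interval $V$, note each zigzag path traverses $U$ monotonically decreasing and $V$ monotonically increasing, and then run a short case analysis on where $P$ and $Q$ first and last diverge. Once $C$ is shown to be a bordered cycle of length in $\{4,6,\dots,2k\}$, it contradicts the hypothesis that $G$ avoids all of $\C^B_4,\dots,\C^B_{2k}$, completing the proof. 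I would also double-check the small degenerate cases (e.g.\ $C$ could a priori have length $2$ if $P$ and $Q$ differ only by a repeated edge, but distinctness of the paths as subgraphs rules this out, or it is absorbed into the $C_4$ case after deleting a trivial digon).
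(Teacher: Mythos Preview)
Your overall plan---find a short even cycle in $P\cup Q$ and show it is bordered---is the paper's strategy, and your closing suggestion to look at where $P$ and $Q$ first and last diverge is precisely how the paper executes it. But the middle of your proposal is derailed by a misreading of the zigzag order: in a $k$-zigzag $v_0v_1\cdots v_k$ the vertex $v_0$ is \emph{not} the global minimum and $v_1$ is \emph{not} the global maximum. Rather, $v_0$ is the \emph{largest} even-indexed vertex and $v_1$ is the \emph{smallest} odd-indexed vertex, so $v_0v_1$ is an \emph{inner} border of the full path, not an outer one; the global extremes live at the far end of the path (indices $k-1$ and $k$). This misconception is what sends you into the ``both $P$ and $Q$ contain the edge $ab$'' detour.

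The symmetric-difference route is also not the cleanest object here: $P\triangle Q$ may decompose into several cycles if the two zigzags share internal vertices, and an arbitrarily chosen cycle from it need not be bordered. The paper sidesteps all of this by aligning the zigzags as $v_0\cdots v_k$ and $v'_0\cdots v'_k$ with $v_0=v'_0$ and $v_k=v'_k$ (the zigzag ordering forces which endpoint plays which role), taking $s$ maximal with $v_i=v'_i$ for all $i\le s$ and $t>s$ minimal with $v_t=v'_t$, and reading off the cycle $v_sv_{s+1}\cdots v_tv'_{t-1}\cdots v'_{s+1}$ of length $2(t-s)\in\{4,\dots,2k\}$. Its interval chromatic number is $2$ because the parity of the index determines which side of $v_0$ a vertex lies on; the edge from $v_s$ to the appropriate one of $v_{s+1},v'_{s+1}$ is one border and the edge from $v_t$ to the appropriate one of $v_{t-1},v'_{t-1}$ is the other. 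So the only case analysis needed is a one-line parity check, not the extended bookkeeping you anticipated.
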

\begin{proof}
Suppose to the contrary that $v_0\dots v_k$ and $v'_0\dots v'_k$ are two different $k$-zigzag paths such that $v_0=v'_0$ and $v_k=v_k'$. Let $s\in \{0,\dots,k\}$ be the largest index such that $v_i=v_i'$ for every  $i\in \{0,\dots,s\}$, and let $t$ be the smallest index larger than $s$ such that $v_t=v_t'$. Then $v_sv_{s+1}\dots v_tv_{t-1}'v_{t-2}'\dots v_s'$ are the consecutive vertices of a cycle of length $2(t-s)$, where $2\leq t-s\leq k$. Also, this cycle has two border edges, namely $v_{s}\min\{v_{s+1},v_{s+1}'\}$ and $\max\{v_{t-1},v_{t-1}'\}v_{t}$. But then $G$ contains a cycle in $\C^{B}_{2(t-s)}$, a contradiction.
\end{proof}
 	
This tells us that the number of $k$-zigzag paths in $G$ is at most $n^2$. Now let us bound the number of $k$-zigzag paths from below.
	
\begin{claim}\label{pathnumber}
The graph $G$ contains at least \[\frac{m^k}{k^k(3n)^{k-1}}\]  $k$-zigzag paths.	
\end{claim}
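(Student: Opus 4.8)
The plan is to count (ordered) walks of length $k$ that happen to be $k$-zigzag paths by a greedy vertex-by-vertex argument, using convexity to push through the degree sums. First I would set up the count of walks $v_0 v_1 \dots v_k$ where at each step we are free to choose the next vertex subject only to the zigzag order constraints; the key observation is that at stage $i$, having already fixed $v_0, \dots, v_i$, the constraint on $v_{i+1}$ is a one-sided inequality relative to $v_i$ (it must lie above, or below, all previously used vertices, but the binding condition is just ``$v_{i+1} > v_i$'' or ``$v_{i+1} < v_i$'' together with being outside the current interval $[\min, \max]$ of used vertices). The rough idea is that the number of valid choices for $v_{i+1}$ is at least $\deg(v_i)$ minus a correction accounting for (a) the neighbors of $v_i$ on the wrong side of the zigzag and (b) the at most $k$ vertices already used.

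The cleanest way to organize this: count pairs (path prefix, next edge) and lower-bound by summing degrees. Concretely, I would show by induction on $i$ that the number of $i$-zigzag paths is at least $\frac{m^i}{k^i (3n)^{i-1}}$ — wait, more carefully — I would build the path from the middle or from one end and at each of the $k$ extension steps lose a factor of roughly $\frac{1}{k \cdot 3n} \cdot m$. The heuristic: a random edge $v_0v_1$ (there are $m$ ordered such, or rather $2m$, but orientation is fixed by the zigzag) forms the base; to extend to $v_2$ we need a neighbor of $v_1$ that is smaller than $v_0$; on average over the choice of the edge, the number of such neighbors is at least $\frac{2m}{n} - O(k)$, and after discarding the (at most $k$) already-used vertices and being pessimistic about which side of $v_1$ the neighbors fall on, we keep at least a $\frac{1}{k}$-fraction of $\frac{2m}{n}$, i.e. at least $\frac{2m}{kn} \geq \frac{m}{3kn} \cdot$ (something). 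Iterating $k$ times from the starting edge gives the bound $\frac{m^k}{k^k(3n)^{k-1}}$, where the $3n$ (rather than $n$) absorbs both the lower-order $O(k)$ losses from forbidden used vertices and constant slack, using that $m = O(n^{1+1/k})$ so $m/n$ is much larger than $k$ asymptotically.

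More precisely, here is the step I would actually write. For a vertex $v$ and a set $W$ of at most $k$ ``forbidden'' vertices, and a threshold $t$, let $d^+(v, W, t)$ (resp. $d^-$) be the number of neighbors $w$ of $v$ with $w > t$ (resp. $w < t$) and $w \notin W$. The core inequality is: for any edge and any way of having built a zigzag prefix ending at $v_i$ with used set $W$ of size $i+1 \le k+1$, the number of choices for $v_{i+1}$ is exactly $d^{\pm}(v_i, W, \max W$ or $\min W)$ depending on parity. Summing over all prefixes of length $i$ and using that $\sum_{v} \deg(v) = 2m$ together with Jensen/averaging — i.e., $\sum$ over prefixes of the extension count is at least (number of prefixes) $\cdot (\frac{2m - (\text{edges lost})}{\text{something}})$ — I get a recursion $Z_{i+1} \geq Z_i \cdot \frac{m}{k \cdot 3n}$ (roughly), with base case $Z_1 = m$ (or $2m$). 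The subtlety making $3n$ appear and not $n$: when I average $\deg(v_i)$ over the endpoints of zigzag prefixes, this is not the same as averaging over all vertices, so I need a clean way to lower-bound $\sum_{\text{prefixes } P} \deg(v_i(P))$ — the trick is to note that extending $P$ backwards (or the symmetric count) relates this sum to $Z_i$ times an average degree, and to be safe one uses the global bound $\deg(v) \le n$ to see the per-step loss is $\le k$ out of an average of $\ge 2m/n$, harmlessly absorbed.

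The main obstacle I anticipate is precisely this averaging step: ensuring that when we extend a zigzag prefix, the relevant degree sum $\sum_P \deg(v_i(P))$ is genuinely at least $\frac{2m}{n} \cdot Z_i$ (or close), since the endpoints of partial zigzags are a biased sample of vertices. I expect this is handled either by (i) a symmetry/double-counting argument — count (prefix, backward-extension) pairs two ways — or (ii) simply by noting $Z_i$ itself is built from such sums so the recursion telescopes correctly if set up with the sum, not the count, as the induction quantity; i.e. carry $\sum_P \deg(v_i(P))$ through the induction rather than $Z_i$ alone. The factors $k^k$ and $(3n)^{k-1}$ in the claim are generous enough that all the $O(k)$ error terms (forbidden used vertices, wrong-side neighbors being at most all of them so we keep a $1/k$ fraction via a pigeonhole over the $k$ ``levels'' of the zigzag) are absorbed without a careful optimization.
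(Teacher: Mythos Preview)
Your approach is natural but has a genuine gap at exactly the point you flag as the ``main obstacle.'' The extension count at step $i$ is $d^{\pm}(v_i; t)$ where the threshold $t$ is $v_{i-1}$ (the previous extreme of the prefix), not $v_i$. So the number of valid extensions depends on the \emph{pair} $(v_i,v_{i-1})$, and averaging $\deg(v_i)$ over prefixes tells you nothing about how many neighbors of $v_i$ lie beyond $v_{i-1}$. Neither of your proposed fixes addresses this: carrying $\sum_P \deg(v_i(P))$ through the induction still only controls total degree, not one-sided degree past a moving threshold; and the ``backwards double-count'' relates $Z_i$ to $Z_{i-1}$, not to the forward extension you need. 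The hand-wave that ``wrong-side neighbors'' cost only a $1/k$ fraction ``via pigeonhole over the $k$ levels'' has no content in a direct greedy argument, since there are no levels yet.

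The paper avoids the averaging problem altogether by a layered peeling construction. Define graphs $G_k \supseteq G_{k-1} \supseteq \dots \supseteq G_1$ by setting $G_k=G$ and obtaining $G_{i-1}$ from $G_i$ by deleting, at every vertex $x$, the edges to its $u=\lfloor m/(2kn)\rfloor$ smallest left-neighbors (call this set $L_i^+(x)$) and its $u$ largest right-neighbors ($R_i^+(x)$). Since at most $2nu\le m/k$ edges are removed per step, $G_1$ still has at least $m/k$ edges. The crucial observation is pointwise, not average: if the edge $v_{i-1}v_i$ survives into $G_{i-1}$, then $v_{i-1}$ was \emph{not} among the $u$ extreme neighbors of $v_i$ in $G_i$, so every vertex of $L_i^+(v_i)$ (resp.\ $R_i^+(v_i)$) lies strictly beyond $v_{i-1}$. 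Hence, starting from any edge $v_0v_1\in G_1$ and successively picking $v_i\in L_i^+(v_{i-1})$ or $R_i^+(v_{i-1})$ according to parity, the zigzag ordering is automatic and there are exactly $u$ choices at each of the $k-1$ extension steps. This gives at least $|E(G_1)|\cdot u^{k-1}\ge (m/k)(m/(3kn))^{k-1}$ zigzag paths. The ``$1/k$ per level'' you were reaching for is precisely this reservation of $u$ neighbors per vertex per layer; it is a preprocessing step, not an averaging argument.
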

\begin{proof}
We will define a sequence of graphs $G_k,\dots,G_1\subs G$ recursively as follows. We set $G_k=G$, and we will obtain $G_{i-1}$ from $G_i$ by deleting the edges between each vertex and its $u=\lfloor m/2kn\rfloor$ largest and $u$ smallest neighbors.

More precisely, we define the left and right neighborhood of a vertex $x_s\in V$ in $G_i$ as
\[
L_i(x_s)=\{x_j:j<s,x_jx_s\in G_i\} \qquad \text{and} \qquad R_i(x_s)=\{x_j:j>s,x_sx_j\in G_i\},
\]
respectively. Also, let $L^+_i(x_s)$ be the $u$ smallest elements of $L_i(x_s)$, and let $R^+_i(x_s)$ be the $u$ largest elements of $R_i(x_s)$. (If $|L_i(x_s)|<u$, then we define $L^+_i(x_s)=L_i(x_s)$, and we do the same for $R^+_i(x_s)$.) We then set 
\[
G_{i-1} = G_i \setminus \left( \bigcup_{s=1}^{n}\{x_jx_s:x_j\in L^{+}_{i}(x_s)\} \cup \bigcup_{s=1}^{n}\{x_sx_j:x_j\in R^{+}_{i}(x_s)\} \right).
\]

Let us collect some properties of the graphs $G_i$.
\begin{enumerate}
 \item We delete at most $2nu\le m/k$ edges from $G_i$ to obtain $G_{i-1}$, so we have $|E(G_i)|\ge mi/k$ for every $i$, and in particular, $|E(G_1)|\ge m/k$.          
 \item For every $x\in V$ and every $i$, we have $L_{2i}^{+}(x)<L_{2i+2}^{+}(x)$ and $R_{2i+1}^{+}(x)<R_{2i-1}^{+}(x)$, where we write $A<B$ for some sets $A,B\subset V$ if $\max A<\min B$.
 \item For every $x\in V$, if $L_{2i-1}(x)$ is non-empty, then $|L_{2i}^{+}(x)|=u$. Similarly, if $R_{2i}(x)$ is non-empty, then $|R_{2i+1}^{+}(x)|=u$.
\end{enumerate}
     
Now we show that for every edge $f=v_0v_1\in G_1$, there are at least $u^{k-1}$ $k$-zigzag paths starting with $f$. Observe that every sequence of vertices $v_0,v_1,\dots,v_k$ satisfying $v_i\in L_i^+(v_{i-1})$ for $i$ even, and $v_i\in R_i^+(v_{i-1})$ for $i$ odd is a $k$-zigzag path by property 2. Also, the number of such paths is exactly $u^{k-1}$ by property 3. 
Hence, using $u\ge m/3kn$, we have at least $|E(G_1)|u^{k-1}>m^k/k^k(3n)^{k-1}$ different $k$-zigzag paths in $G$.
\end{proof}
 
Now comparing our lower and upper bound for the number of $k$-zigzag paths in $G$, we arrive at the inequality 
\[ n^2 \ge \frac{m^k}{k^k(3n)^{k-1}}, \]
which yields $m<3kn^{1+1/k}$. \qed

\section{Deleting small cycles}
\label{sec:c2lfree}

Our proof of Theorem \ref{thm:c2lfree} is inspired by the proof of Gr\'osz, Methuku and Tompkins in \cite{GMT} on deleting $4$-cycles, which is a simple proof of a theorem of K\"{u}hn and Osthus \cite{kuhn_osthus}. We make use of the following result of Gallai \cite{G1968} and Roy \cite{R1967}.

\begin{theorem}[Gallai--Roy] \label{thm:gallai}
If a directed graph $G$ contains no directed path of length $h$ then $\chi(G)\le h$.
\end{theorem}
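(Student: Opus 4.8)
The goal is to prove the Gallai--Roy theorem: if a directed graph $G$ has no directed path of length $h$ (i.e., no directed path on $h+1$ vertices, using $h$ arcs), then $\chi(G)\le h$. The standard approach is to produce an explicit proper coloring of $V(G)$ using $h$ colors, where the colors are derived from longest-path data. First I would pass to a maximal acyclic subdigraph $D\subseteq G$: remove arcs (keeping all vertices) until $D$ is acyclic but adding back any removed arc $uv$ of $G$ creates a directed cycle through it in $D$; equivalently, for every arc $uv\in E(G)\setminus E(D)$ there is already a directed $u$--$v$ path in $D$. Note $D$ also has no directed path of length $h$, since $D\subseteq G$.

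\textbf{The coloring.} For each vertex $v$, let $c(v)$ be the number of vertices on a longest directed path in $D$ ending at $v$ (so $c(v)\ge 1$, counting $v$ itself). Since $D$ has no directed path of length $h$, every directed path has at most $h$ vertices, so $c(v)\in\{1,\dots,h\}$ and we are using at most $h$ colors. The key step is to verify this is a proper coloring of $G$. First check it is proper for $D$: if $uv\in E(D)$, then appending $v$ to a longest path ending at $u$ gives a directed path ending at $v$ with $c(u)+1$ vertices, so $c(v)\ge c(u)+1>c(u)$; in particular $c(u)\ne c(v)$. Now extend to all of $G$: take an arbitrary arc $uv\in E(G)$. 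If $uv\in E(D)$ we are done by the above. Otherwise, by maximality of $D$ there is a directed path in $D$ from $u$ to $v$; applying the monotonicity $c(\cdot)$ strictly increases along each arc of $D$, we get $c(v)>c(u)$, hence again $c(u)\ne c(v)$. So adjacent vertices (in the underlying sense of being joined by an arc in either direction) receive different colors, and $\chi(G)\le h$.

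\textbf{Main obstacle.} The only subtle point is handling arcs of $G$ that are not in the chosen acyclic subdigraph $D$ — one must ensure the longest-path coloring built from $D$ still separates the endpoints of such arcs. Passing to a \emph{maximal} acyclic subdigraph is exactly what makes this work: maximality forces a $u$--$v$ dipath in $D$ for every missing arc $uv$, and strict monotonicity of $c$ along dipaths of $D$ then gives $c(u)<c(v)$. (An alternative, avoiding the reduction to $D$ entirely, is to argue directly: if $G$ itself were acyclic, take $c(v)$ to be the number of vertices on a longest dipath ending at $v$; the reduction step is only needed because $G$ may contain directed cycles, in which case no such path-length function on $G$ is monotone along every arc. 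One must be slightly careful that ``longest path'' is well-defined, which it is precisely because the relevant digraph is acyclic and finite.) I would also remark that this bound is tight: a transitive tournament on $h$ vertices has longest dipath of length $h-1$ and chromatic number $h$, showing the threshold $h$ cannot be lowered.
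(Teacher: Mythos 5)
The paper does not prove this statement at all: Theorem~\ref{thm:gallai} is quoted as a classical result with references to Gallai and Roy, and is used as a black box in the proof of Theorem~\ref{thm:c2lfree}. So there is no in-paper argument to compare against; what you have written is the standard textbook proof (maximal acyclic subdigraph plus longest-path colouring), and its overall structure is sound: $c$ takes at most $h$ values because every dipath in $D$ has at most $h$ vertices, and $c$ is strictly increasing along arcs of $D$.

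One direction is stated backwards, though, and as written one of your claims is false. If $uv\in E(G)\setminus E(D)$, maximality of $D$ says that $D+uv$ contains a directed cycle through $uv$, and such a cycle consists of the arc $uv$ together with a directed path from $v$ to $u$ in $D$ --- not a $u$--$v$ path as you assert (a $u$--$v$ dipath together with the arc $uv$ does not close a directed cycle, since both are oriented the same way). Consequently the correct conclusion from monotonicity is $c(u)>c(v)$, not $c(v)>c(u)$. This is a local slip rather than a gap: all the properness argument needs is $c(u)\ne c(v)$, which holds either way, so after reversing the roles of $u$ and $v$ in that step the proof is complete. Your closing remark on tightness (the transitive tournament on $h$ vertices) is correct.
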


\begin{proof}[Proof of Theorem \ref{thm:c2lfree}]
Let $G=(V,E)$ be an ordered graph which is $\C_{2k}^{B}$-free, where the elements of $V$ are $x_{1}<\dots<x_{n}$. Define the directed graph $H$ on $E$ as a vertex set such that for $f,f'\in E$, $\overrightarrow{ff'}$ is a directed edge of $H$ if there exists a bordered $2l$-cycle with outer border $f$ and inner border $f'$. Note that $H$ is acyclic, because if $\overrightarrow{ff'}\in E(H)$, where $f=ab$, $f'=a'b'$, $a<b$ and $a'<b'$, then $a<a'<b'<b$. %acyclicity not really needed

We show that the longest directed path in $H$ has length less than $h=\frac{k-1}{l-1}$. Suppose to the contrary that there is a directed path  $f_1\dots f_{h+1}$ in $H$. Then for every $i=1,\dots, h$, there is a bordered cycle $C_{i}$ with outer border $f_{i}$ and inner border $f_{i+1}$. Then it is easy to see that $\left(\bigcup_{i=1}^{h}C_{i}\right)\setminus \{f_{2},\dots,f_{h}\}$
is a bordered cycle of length $2lh-2h+2=2k$, with outer border $f_{1}$, and inner border $f_{h+1}$, contradicting the choice of $G$.

Hence we can apply Theorem~\ref{thm:gallai} to get a proper $h$-coloring of $H$. Here the largest color class $E_0\subs E$ is an independent set of size at least $\frac{l-1}{k-1}\abs{E}$, so there is no cycle in $\C^{B}_{2l}$ that has all its edges in $E_0$. The edges of $E_0$ will then form an ordered subgraph of $G$ that satisfies our conditions.
\end{proof}

\section{Concluding remarks}
\label{sec:remarks}

Note that Theorem \ref{thm:hexagon} is stronger than the $k=3$ case of Theorem~\ref{BS} because it only forbids three out of the six orderings of the hexagon. In fact, it is enough to forbid two orderings of the hexagon to achieve the same asymptotic bound.

\begin{theorem}
\label{thm:2hexagons}
Let $\mathscr{C}_{1}=\{C^{2}_{6},C^{1}_{6}\}$, $\mathscr{C}_{2}=\{C^{2}_{6},C^{3}_{6}\}$, $\mathscr{C}_{3}=\{C^{U}_{6},C^{I}_{6}\}$, and $\mathscr{C}_{4}=\{C^{U}_{6},C^{O}_{6}\}$. For any $i \in \{1,2,3,4\}$, we have $\ex_{<}(n,\mathscr{C}_{i})=\Theta(n^{4/3}).$
\end{theorem}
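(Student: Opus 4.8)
For $i\in\{1,2\}$ we have $\mathscr{C}_i\subseteq\C_6^B$, so the ordered graph constructed in Section~\ref{sec:lowerbound} with $k=3$ — which has $\Omega(n^{4/3})$ edges and avoids every bordered $6$-cycle — already avoids $\mathscr{C}_i$. For $i\in\{3,4\}$ I would use that this construction has interval chromatic number $2$, its two intervals being the first and second halves of the vertex set, and reverse the order of the vertices inside the second interval. This turns every ordered $6$-cycle with intervals $U<V$ into the one obtained from it by reversing $V$, which by the remark accompanying Figure~\ref{figure1} sends $C_6^1,C_6^2,C_6^3$ to $C_6^I,C_6^U,C_6^O$. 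Hence the reflected construction avoids $\{C_6^U,C_6^I,C_6^O\}\supseteq\mathscr{C}_3,\mathscr{C}_4$ and still has $\Omega(n^{4/3})$ edges.

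\textbf{Reduction of the upper bound to one case.} I claim it suffices to prove $\ex_<(n,\mathscr{C}_2)=O(n^{4/3})$. First, reversing the whole vertex order is a symmetry of $\ex_<$ that fixes $C_6^2$ and interchanges $C_6^1$ and $C_6^3$, so $\ex_<(n,\mathscr{C}_1)=\ex_<(n,\mathscr{C}_2)$. Next, every graph occurring in $\mathscr{C}_3\cup\mathscr{C}_4$ has interval chromatic number $2$; for such forbidden graphs $\ex_<(n,\cdot)$ agrees, up to a constant factor, with the corresponding forbidden-$0/1$-submatrix extremal function — splitting the vertex order along a balanced binary tree and charging each edge to the least common ancestor of its endpoints yields a geometric sum with ratio below $1$ (because the relevant exponent $4/3$ exceeds $1$), so there is no logarithmic loss. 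Reversing the column order of a $0/1$ matrix is a symmetry of the submatrix extremal function, and one checks directly that it sends $A_{C_6^U}\mapsto A_{C_6^2}$, $A_{C_6^I}\mapsto A_{C_6^1}$, $A_{C_6^O}\mapsto A_{C_6^3}$; hence the pattern set of $\mathscr{C}_3$ goes to that of $\mathscr{C}_1$ and the pattern set of $\mathscr{C}_4$ to that of $\mathscr{C}_2$. Combining these facts with $\ex_<(n,\mathscr{C}_2)=O(n^{4/3})$ gives the upper bound in all four cases.

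\textbf{The case $\mathscr{C}_2$.} Here I would run the argument of Section~\ref{sec:upperbound} with $k=3$. Repeating the proof of Claim~\ref{pathnumber}, an ordered graph with $m$ edges on $n$ vertices has $\Omega(m^3/n^2)$ $3$-zigzag paths. In place of Claim~\ref{uniquepath}, the key point is the following weaker dichotomy: if $v_0v_1v_2v_3$ and $v_0v_1'v_2'v_3$ are $3$-zigzag paths with the same endpoints and $v_1\neq v_1'$, $v_2\neq v_2'$, then their union is a $6$-cycle of interval chromatic number $2$ — its color classes are $\{v_0,v_2,v_2'\}$ and $\{v_1,v_3,v_1'\}$, separated at $v_0$ — and a short case check on the relative orders of $v_1,v_1'$ and of $v_2,v_2'$ shows this $6$-cycle is always an ordered copy of $C_6^2$ or $C_6^3$, which is forbidden. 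Consequently, for every pair of vertices, either all $3$-zigzag paths between them share their middle vertex $v_1$ or all share their middle vertex $v_2$.

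\textbf{Main obstacle.} The difficulty is to deduce from this Helly-type dichotomy that there are only $O(n^2)$ $3$-zigzag paths. Unlike in Section~\ref{sec:upperbound}, the bordered $4$-cycle is \emph{not} among the forbidden subgraphs, so two $3$-zigzag paths between the same endpoints may agree in one interior vertex (their union then being a bordered $C_4$), and the clean uniqueness of Claim~\ref{uniquepath} is lost. I would deal with this by first passing to a subgraph all of whose degrees are within a constant factor of $m/n$ — a standard reduction that does not affect the exponent — and then bounding, separately for the $v_1$-type and $v_2$-type pairs, the number of $3$-zigzag paths by a sum over the shared interior ``witness'' vertex that near-regularity keeps at $O(n^2)$; controlling the pairs whose endpoints have many common neighbours is the delicate step. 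Once that is in hand, comparing the two estimates on the number of $3$-zigzag paths gives $m=O(n^{4/3})$ exactly as in Section~\ref{sec:upperbound}.
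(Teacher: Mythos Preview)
Your lower bound and your symmetry reduction to a single family are both fine and match what the paper does (the paper reduces to $\mathscr{C}_1$ rather than $\mathscr{C}_2$, but $C_6^2$ lies in both, and that is the cycle that actually drives the argument). Your case check that two $3$-zigzag paths with the same endpoints and \emph{both} interior vertices different produce a copy of $C_6^2$ or $C_6^3$ is also correct.

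The gap is exactly where you flag it. Your dichotomy only says that for each endpoint pair all $3$-zigzag paths share \emph{some} interior vertex; it does not bound how many such paths there are, and your proposed near-regularity workaround is not an argument --- summing over the shared ``witness'' vertex still leaves you counting common neighbours of $v_3$ and the witness, and nothing so far prevents these from being of order $d\approx m/n$ for many pairs. The paper resolves this cleanly with an idea you are missing: if $G$ avoids $C_6^2$, then it cannot contain two bordered $4$-cycles whose inner border of one equals the outer border of the other (gluing them along that edge yields precisely $C_6^2$). Feeding this ``no nesting'' property into the Gallai--Roy argument of Theorem~\ref{thm:c2lfree} (with $l=2$ and $h=2$) produces a subgraph with at least half the edges that contains no bordered $4$-cycle at all. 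In that subgraph, two $3$-zigzag paths between the same endpoints cannot share exactly one interior vertex either (that would create a bordered $C_4$), so Claim~\ref{uniquepath} holds verbatim and the count of $3$-zigzag paths is at most $n^2$. This replaces your ``delicate step'' entirely.
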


\begin{proof}[Sketch of the proof.] It is enough to show that every $\mathscr{C}_i$-free ordered graph $G$ on $2n$ vertices has $O(n^{4/3})$ edges between the first $n$ and the last $n$ vertices. Indeed, an inductive argument applied to the two halves of $G$ then yields a $O(n^{4/3})$ upper bound on the total number of edges, as well. We first show this for $\mathscr{C}_1$, so let $G$ be an ordered graph on the vertex set $A\cup B$ with $|A|=|B|=n$ and $A<B$ that has no edges induced by $A$ or $B$, and avoids $C^1_6$ and $C^2_6$.

Note that $G$ cannot contain two bordered 4-cycles such that the inner border of one is the outer border of the other, because they would create a copy of $C^2_6$. So by the argument of Theorem \ref{thm:c2lfree}, we can assume that $G$ does not contain any bordered $4$-cycle. The rest of the proof follows that of Theorem \ref{thm:borderedgirth}; we only need that, analogously to Claim \ref{uniquepath}, if for some $x\in A,y\in B$, we have two 3-zigzag paths $P_1$ and $P_2$ from $x$ to $y$, then $P_1\cup P_2$ is either $C^1_6$ or $C^2_6$, or it induces a bordered 4-cycle. So we once again get that the number of $3$-zigzag paths in $G$ is at most $n^2$, and can finish the argument as before.

To obtain an upper bound on $\ex_{<}(n,\mathscr{C}_{i})$ for $i \in \{2, 3, 4\}$, note that we can obtain each $\mathscr{C}_{i}$ from $\mathscr{C}_1$ by reversing the order of the vertices in one (or both) of the color intervals. This means, for example, that the graph $G$ above is $\mathscr{C}_1$-free if and only if the graph $G'$, obtained from $G$ by reversing the order of vertices in $B$, is $\mathscr{C}_3$-free. In particular, such a $G'$ has $O(n^{4/3})$ edges, and a similar reduction works for all other $i$.
\end{proof}

As we mentioned before, Pach and Tardos \cite{PT2006} showed that $\ex_{<}(n,\mathscr{C})=\Theta(n^{4/3})$ for a certain set of cycles that they call ``positive''. They also asked if it would be enough to forbid the positive $6$-cycles (i.e., $C^{1}_{6},C^{3}_{6},C^{O}_{6},C^{I}_{6}$) to get the same upper bound. More generally, we propose the following conjecture.

\begin{conjecture} \label{conj:hex}
Let $C$ be an ordered 6-cycle of interval chromatic number $2$. Then
\[ \ex_{<}(n,C)=\Theta(n^{4/3}). \]
\end{conjecture}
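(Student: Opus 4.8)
The plan is to use the interval-reversal symmetries from the proof of Theorem~\ref{thm:2hexagons} to reduce the six cases to two, and then to attack those two by upgrading the zigzag-path argument of Section~\ref{sec:upperbound} to a supersaturation/stability argument. Reversing the order inside the first interval (call this operation $r_1$) or inside the second interval ($r_2$) turns a $C$-free ordered graph of interval chromatic number $2$ into an $r_i(C)$-free one with the same number of edges; $r_1$ and $r_2$ are commuting involutions, and the group of order $4$ that they generate acts on the six ordered bipartite hexagons with exactly two orbits: the positive cycles $\{C_6^1,C_6^3,C_6^I,C_6^O\}$, and $\{C_6^2,C_6^U\}$. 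Hence $\ex_<(n,C)$ assumes only two values, and it suffices to prove $\ex_<(n,C_6^1)=O(n^{4/3})$ and $\ex_<(n,C_6^2)=O(n^{4/3})$. The matching lower bounds are already available: every one of the six hexagons lies in one of the sets $\mathscr{C}_1,\dots,\mathscr{C}_4$, so monotonicity together with Theorem~\ref{thm:2hexagons} (or directly the $B_3$-set construction of Section~\ref{sec:lowerbound}, possibly after reversing its columns) gives $\ex_<(n,C)=\Omega(n^{4/3})$. So the whole content of the conjecture is the single-cycle upper bound; note that even its positive-orbit case is strictly stronger than the question of Pach and Tardos, who only ask for the upper bound when all four positive $6$-cycles are forbidden.

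Before attempting anything new, I would pin down why the method of Section~\ref{sec:upperbound} and of Theorem~\ref{thm:2hexagons} stops at \emph{pairs}. After cleaning away bordered $4$-cycles, the union of two distinct $3$-zigzag paths between a fixed $x\in A$ and $y\in B$ is a genuine ordered $6$-cycle, but which bordered hexagon it is depends on the \emph{chirality} of the configuration --- whether the smaller of the two interior left-vertices is joined, along the paths, to the smaller or the larger of the two interior right-vertices. One chirality yields one bordered hexagon and the other a different one (for the standard $3$-zigzag shape this unordered pair is one of the $\mathscr{C}_i$, and other shapes of canonical path realize the remaining pairs). Forbidding a suitable \emph{pair} therefore kills both chiralities, so Claim~\ref{uniquepath} generalizes and Claim~\ref{pathnumber} closes the argument; but forbidding a \emph{single} hexagon always leaves one chirality alive, and then a pair $(x,y)$ can carry arbitrarily many $3$-zigzag paths --- a ``star plus monotone matching'' ordered graph already does this with only $O(n)$ edges --- so the bound of $n^2$ on the number of zigzag paths fails. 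A proof of the conjecture must therefore control the surviving chirality \emph{globally}, not pair by pair.

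My proposed route is a supersaturation bootstrap. Suppose $G$ is $C_6^1$-free with $m\ge\varepsilon n^{4/3}$ edges and interval chromatic number $2$. Claim~\ref{pathnumber} still produces at least $c\,n^2$ $3$-zigzag paths, with $c=c(\varepsilon)>0$; since there are only $n^2$ endpoint pairs, many pairs carry large same-chirality bundles, which forces $G$ to contain a superlinear-in-$n$ number of copies of one of the other bordered hexagons, spread over the whole vertex set. The crux is then a stability statement: a $C_6^1$-free ordered graph with $\Omega(n^{4/3})$ edges must, up to $r_1$ and $r_2$, essentially coincide with the $B_3$-set construction of Section~\ref{sec:lowerbound}; in that rigid regime the abundant hexagons of the other types can be made to force a copy of $C_6^1$, a contradiction. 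For the positive orbit one might instead try to sharpen the incidence-geometry reduction of Pach and Tardos so that it detects a single forbidden configuration rather than a whole family.

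The step I expect to be the main obstacle is exactly this stability claim. The zigzag count is insensitive to the internal orders --- within $A$, and within $B$ --- that distinguish the bordered hexagons from one another, so it cannot by itself pin down which ordered hexagon a double zigzag path realizes; extracting a single forbidden ordering thus seems to require an ordered analogue of a stability theorem for $\ex(n,C_6)$, namely that graphs close to the extremal edge count are close to the algebraic construction. No such statement is known even in the unordered setting, where only the numerical bounds of~\eqref{eq:c_6_free} are available, so this is realistically the hard open core of the conjecture rather than a routine verification.
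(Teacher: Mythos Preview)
The statement you are asked to prove is a \emph{conjecture} in the paper, not a theorem: the paper offers no proof, and explicitly presents it as an open problem in Section~\ref{sec:remarks}. So there is no ``paper's own proof'' to compare against, and your proposal is not expected to be a complete argument.

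That said, your write-up is honest about this. Your symmetry reduction is correct and worth recording: the involutions $r_1,r_2$ do generate a Klein four-group acting on the six bipartite ordered hexagons with exactly the two orbits $\{C_6^1,C_6^3,C_6^I,C_6^O\}$ and $\{C_6^2,C_6^U\}$, so the conjecture indeed amounts to two single-cycle upper bounds. Your diagnosis of why the zigzag argument of Section~\ref{sec:upperbound} and Theorem~\ref{thm:2hexagons} stalls at pairs is also accurate: forbidding one ordering kills only one ``chirality'' of a double $3$-zigzag, and the surviving chirality allows unboundedly many zigzag paths between a fixed pair, so Claim~\ref{uniquepath} has no single-cycle analogue.

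The genuine gap is exactly where you place it. The supersaturation-plus-stability bootstrap you outline is speculative: no stability result of the required strength is known even for unordered $C_6$-free graphs, and your sketch gives no mechanism by which an abundance of hexagons of \emph{other} ordered types would force a copy of the one forbidden type. Until that mechanism is supplied, this remains a plan of attack rather than a proof, which is consistent with the paper leaving the statement as a conjecture.
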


Finally, let us remark that while we are unable to prove that $\ex_{<}(n,\C^{B}_{2k})=O(n^{1+1/k})$, there is certainly no absolute constant $\varepsilon>0$ such that $\ex_{<}(n,\C^{B}_{2k})\ge n^{1+\varepsilon}$ for every $k$: 
\begin{theorem}
 There exists a sequence of positive real numbers $(\lambda_k)_{k=2,3,\dots}$ such that $\ex_{<}(n, \C^{B}_{2k})=O(n^{1+\lambda_k})$ and $\liminf_{k\rightarrow \infty}\lambda_k=0$.	
\end{theorem}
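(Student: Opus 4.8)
The plan is to extract a $C_{2k}^B$-free ordered graph from a high-girth construction so that $\lambda_k$ can be taken small when $k$ is large. The key observation is that Theorem~\ref{thm:borderedgirth} already gives us, for \emph{every} fixed $j$, an ordered graph on $n$ vertices with $\Theta(n^{1+1/j})$ edges avoiding \emph{all} bordered cycles of length at most $2j$. Such a graph is in particular $\C^B_{2k}$-free whenever $k \le j$, which tells us the lower bound is at least $n^{1+1/k}$, but does not immediately help for the \emph{upper} bound. For the upper bound I would instead chain together the upper-bound machinery of Section~\ref{sec:upperbound} with Theorem~\ref{thm:c2lfree}. Given a $\C^B_{2k}$-free ordered graph $G$ with $m$ edges, pick any $l \ge 2$ with $l-1 \mid k-1$ and apply Theorem~\ref{thm:c2lfree}: $G$ contains a $\C^B_{2l}$-free subgraph $G'$ with at least $\frac{l-1}{k-1}m$ edges. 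But a $\C^B_{2l}$-free ordered graph need not be $\{\C^B_4,\dots,\C^B_{2l}\}$-free, so the bound $|E(G')| = O(n^{1+1/l})$ from Section~\ref{sec:upperbound} does not apply directly. I would iterate: repeatedly apply Theorem~\ref{thm:c2lfree} with a decreasing chain of values, say $k = k_0 > k_1 > \cdots > k_r = 2$ with $k_{i+1}-1 \mid k_i - 1$, to peel off one cycle length at a time, arriving at a $\C^B_4$-free subgraph $G^*$ retaining a constant (depending on $k$, not $n$) fraction $c_k = \prod (k_{i+1}-1)/(k_i-1)$ of the edges. A $\C^B_4$-free ordered graph is $\{\C^B_4\}$-free, hence by the $k=2$ case of Theorem~\ref{thm:borderedgirth} (equivalently the $C_4$-free bound via the ordering remark after Theorem~\ref{thm:c2lfree}), $|E(G^*)| = O(n^{3/2})$. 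This only yields $m = O(n^{3/2})$, which is hopeless.

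So the chaining-down-to-$C_4$ idea is the wrong direction; one must chain so as to actually invoke the \emph{girth} theorem. The right move is: inside a $\C^B_{2k}$-free graph $G$, use Theorem~\ref{thm:c2lfree} with a single well-chosen $l$ to get a $\C^B_{2l}$-free subgraph $G'$, then recurse \emph{on $G'$}, noting $G'$ is $\C^B_{2l}$-free so we are in the same situation with $k$ replaced by $l$. But to make the base case give something better than $n^{3/2}$ we must stop at a large value. Concretely: there is no finite bound on the number of edge-disjoint cycle-lengths we can forbid at once while keeping $\Theta(n^{1+1/k})$ edges, but that is a lower-bound phenomenon. For the upper bound the genuinely useful tool is a bound of the form ``$\C^B_{2k}$-free alone forces $O(n^{1+\lambda_k})$.'' The cleanest route I see: show directly, by a zigzag-path argument mimicking Claims~\ref{uniquepath} and~\ref{pathnumber}, that a $\C^B_{2k}$-free graph has few $k'$-zigzag paths for a suitable $k' = k'(k)$. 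The point is that if $G$ has two distinct $k'$-zigzag paths between a fixed pair of endpoints, the symmetric difference contains a bordered cycle of length $2t$ for some $2 \le t \le k'$; if $t$ happens to equal $k$ we get a forbidden configuration and a contradiction. Generically $t$ will not equal $k$, but by choosing $k'$ to be a large multiple-like quantity relative to $k$ and by a pigeonhole/averaging over many disjoint zigzag pieces, one can force a bordered cycle of exactly length $2k$ (glue $k$ short bordered zigzag ``loops'' together along shared borders as in the proof of Theorem~\ref{thm:c2lfree}). This gives: the number of $k'$-zigzag paths is $O(n^2)$, hence $m^{k'} / (k')^{k'}(3n)^{k'-1} \le n^2$, i.e. $m = O(n^{1 + 1/k'})$. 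Setting $\lambda_k = 1/k'(k)$ and arranging $k'(k) \to \infty$ as $k \to \infty$ then gives $\liminf \lambda_k = 0$.

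The main obstacle, and the step I would spend the most care on, is making the gluing argument produce a bordered cycle of length \emph{exactly} $2k$ from pieces available inside a long zigzag path (or a rich collection of zigzag paths). In the proof of Theorem~\ref{thm:c2lfree} the cycles $C_i$ come prepackaged with matching inner/outer borders; here I must manufacture that structure. The natural fix is to run the Section~\ref{sec:upperbound} peeling construction to produce, from $G$, many vertex-indexed ``canonical'' short zigzag segments whose endpoints are forced to be borders of the relevant intervals, then concatenate $\lceil (k-1)/(l-1)\rceil$ of them — with $l$ chosen so $l-1 \mid k-1$ — exactly as in Theorem~\ref{thm:c2lfree}, to build the length-$2k$ bordered cycle whose existence contradicts $\C^B_{2k}$-freeness. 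If the arithmetic (which residues of path-lengths modulo which quantities are achievable) does not cooperate for a given $k$, one can instead take $k'(k)$ to be the largest integer below $k$ of the form $1 + (l-1)\cdot(\text{something})$, accept a slightly weaker exponent, and still get $k'(k) \to \infty$; the freedom to let $\lambda_k$ be merely \emph{some} sequence with $\liminf 0$ makes this kind of slack harmless. A secondary, purely bookkeeping obstacle is tracking the $k$-dependent constants through the iteration to be sure they remain $O(1)$ in $n$ — this is routine given that $k$ is a fixed constant.
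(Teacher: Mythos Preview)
Your first idea---iterate Theorem~\ref{thm:c2lfree}---is exactly what the paper does, and you abandon it one observation too early. The point you miss is that when you pass from a $\C^B_{2k}$-free graph $G$ to a $\C^B_{2l}$-free subgraph $G'$, the graph $G'$ is \emph{still} $\C^B_{2k}$-free (it is a subgraph of $G$). Hence every subsequent application of Theorem~\ref{thm:c2lfree} may again use the divisibility condition $l'-1\mid k-1$, not the much more restrictive $l'-1\mid l-1$ implicit in your chain $k=k_0>k_1>\cdots$. The paper exploits this by taking $k=(m-1)!+1$, so that $l-1\mid k-1$ for \emph{every} $l\in\{2,\dots,m\}$; peeling off $\C^B_{2m},\C^B_{2(m-1)},\dots,\C^B_{4}$ one at a time (always invoking Theorem~\ref{thm:c2lfree} against the original $k$) costs only a constant factor in the edges and leaves a subgraph that is $\{\C^B_4,\dots,\C^B_{2m}\}$-free. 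Now Theorem~\ref{thm:borderedgirth} with parameter $m$---not the $C_4$ bound---gives $O(n^{1+1/m})$ edges, so $\lambda_{(m-1)!+1}=1/m$ works and $\liminf\lambda_k=0$. Your diagnosis ``this only yields $m=O(n^{3/2})$'' came from terminating at $\C^B_4$ and forgetting the accumulated freeness from all the intermediate $l$'s.

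Your alternate route via a direct $k'$-zigzag count is both unnecessary and, as written, incomplete: the claim that ``by a pigeonhole/averaging over many disjoint zigzag pieces, one can force a bordered cycle of exactly length $2k$'' is not justified. Two $k'$-zigzag paths with common endpoints produce a bordered $2t$-cycle for some $2\le t\le k'$, and there is no mechanism offered to control $t$; the gluing argument you gesture at would need cycles with nested matching borders, which is precisely what Theorem~\ref{thm:c2lfree} packages---so even if this line were completed it would essentially rederive the iteration above.
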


\begin{proof}
We will show that we can choose $\lambda_{(m-1)!+1}=1/m$. Let $k=(m-1)!+1$ and let $G$ be a $\C^{B}_{2k}$-free ordered graph with $n$ vertices. Then for any $2 \le l \le m$, $l-1$ divides $k-1$. Therefore, applying Theorem \ref{thm:c2lfree} repeatedly, we obtain a subgraph $G'$ of $G$ such that $G'$ is $\{\C^{B}_{4},\C^{B}_{6},\dots,\C^{B}_{2m}\}$-free, and $G'$ has at least $(m-1)!/(k-1)^{m-1}$ proportion of the edges of $G$. But then, by Theorem \ref{thm:borderedgirth}, $|E(G')|=O(n^{1+1/m})$, and thus $|E(G)|=O(n^{1+1/m})$, as well.
\end{proof}

\medskip
After our paper was submitted, we learned that Timmons \cite{T12} also studied the Tur\'an number of ordered cycles, and observed that $\ex_<(n,\C_{2k})=O(n^{1+1/k})$ for the family $\C_{2k}$ of all ordered $2k$-cycles with interval chromatic number 2. On the other hand, he found the construction we presented in Section~\ref{sec:lowerbound}, and asked whether a matching upper bound holds, i.e., if the upper bound $O(n^{1+1/k})$ holds when only the ordered $2k$-cycles with an inner or an outer border are forbidden. Our Theorem~\ref{thm:hexagon} (or Theorem~\ref{thm:2hexagons}) answers this question positively for $k=3$ (for an even smaller subfamily), and Theorem~\ref{thm:borderedgirth} answers a variant for every $k$ where shorter cycles are also forbidden. We kept the construction in this paper for completeness.

\medskip
\noindent
\textbf{Acknowledgments.} \\
We thank G\'abor Tardos for fruitful discussions, and Craig Timmons for bringing \cite{T12} to our attention. We also thank the referees for their valuable comments.

\end{document}